\documentclass[10pt,a4paper]{article}
\usepackage{fullpage}
\usepackage{theorem,graphicx,amssymb,amsmath,url}

\usepackage{theorem,graphicx,amssymb,amsmath,mathrsfs,url}

\theorembodyfont{\slshape}

\newtheorem{thm}{Theorem}
\newtheorem{lem}[thm]{Lemma}
\newtheorem{cor}[thm]{Corollary}
\newtheorem{prop}[thm]{Proposition}
\newtheorem{defn}[thm]{Definition}
\newtheorem{rmk}[thm]{Remark}

\def\QED{\ensuremath{{\square}}}
\def\markatright#1{\leavevmode\unskip\nobreak\quad\hspace*{\fill}{#1}}
\newenvironment{proof}
{\begin{trivlist}\item[\hskip\labelsep{\bf Proof.}]}
	{\markatright{\QED}\end{trivlist}}

\usepackage[mathlines]{lineno}
\usepackage{xcolor}
\usepackage[inline]{enumitem}
\usepackage{bbm}
\usepackage{hyperref} 

\DeclareMathOperator{\Ima}{Im}

\graphicspath{{./figures/}}

\begin{document}
\title{A note on friezes of type $\Lambda_4$ and $\Lambda_6$}
\author{
    Lukas Andritsch\thanks{Mathematics and Scientific Computing, University of Graz, Graz, Austria, \newline  {\tt lukas.andritsch@uni-graz.at} }
}

\title{A note on friezes of type $\Lambda_p$}

	\author{
		Lukas Andritsch\thanks{Mathematics and Scientific Computing, University of Graz, Graz, Austria, \newline  {\tt lukas.andritsch@uni-graz.at} }}

\maketitle


\begin{abstract}
	A frieze is an array of numbers obeying the unimodular rule. Coxeter showed that a frieze with integer entries corresponds to a triangulation. Recently, Holm and J{\o}rgenson introduced friezes of type $\Lambda_p$ which correspond to $p$-angulations of a polygon. In this paper we explore the connection between these two types of friezes; in particular we show that the friezes of type $\Lambda_p$ for $p=4$ and $p=6$ contain integral friezes within them. We also consider the relationships with Farey graphs.
\end{abstract}

\textit{Keywords:}	frieze, $p$-angulation, Farey graph
	
\textit{2010 MSC:} 05B45,05E15

\tableofcontents

\section{Introduction}
\nocite{bpt_2016} \nocite{c_2015} \nocite{CoCo_1973a} \nocite{CoCo_1973b} \nocite{Co_1971} \nocite{dp_1993} \nocite{hj_2017} \nocite{m_2015} \nocite{m-got_2015} \nocite{ps_2000} \nocite{sw_2016}
Friezes (or frieze patterns) were defined and studied by Coxeter in \cite{Co_1971}. These are arrays of the following form:

\begin{defn}\label{no:frieze}
	A frieze $F$ of width $n$ is an array of shifted infinite rows of (positive) real numbers, satisfying the unimodular rule: for every diamond of the form 
	$\begin{smallmatrix} & b \\ a & & d \\ & c \end{smallmatrix}$, we have $ad-bc=1$. 
	Furthermore, it starts and ends with rows of 0's and 1's and has $n$ non-trivial rows between them. 
\end{defn}

See Figure~\ref{fig:cc_frieze} for an example of a frieze of width 7. 
We subscript the entries so that the $r$-th row has entries $m_{i,i+r}$ for $i\in \mathbb{Z}$ and $-2\leq r \leq n+1$. The trivial rows of $0$'s at top and bottom have entries $m_{i,i-2}$ and $m_{i,i+n+1}$ and the trivial rows of 1's have entries $m_{i,i-1}$ and $m_{i,i+n}$. Thus for all $i\in \mathbb{Z}$ and $-1\leq r\leq n$, the unimodular rule is $m_{i,i+r}m_{i+1,i+r+1}-m_{i+1,i+r}m_{i,i+r+1}=1.$
\begin{rmk}
	When we look at an entry $m_{i,j}$ of a frieze $(m_{i,j})_{i,j\in \mathbb{Z}}$ of width $n$, we tacitly require $i\in \mathbb{Z}$ and $-2\leq j-i \leq n+1$.  
\end{rmk}

Due to~\cite{Co_1971}, rows of friezes of width $n$ are periodic with period dividing $n+3$. The first non-trivial row $(m_{i,i})_{i\in \mathbbm{Z}}$ is called the \emph{quiddity row} and we call any $(n+3)$-tuple of successive entries of the quiddity row a {\em quiddity sequence}. All entries of a given frieze are determined by any of the frieze's quiddity sequences.

\begin{figure}[htp!]
\begin{center}	\resizebox{0.8\linewidth}{!}{
		\begin{tabular}{c c c c c c c c c c c c c c c c c c c c c}
			$0$ & & $0$&  & $0$  & & $0$ & & $0$ & & $0$ & & $0$ & & $0$ & & $0$ & & $0$ & &  \\
			&$1$ & &$1$  &  &$1$ & &$1$ & &$1$ & &$1$ &  &$1$ & &$1$ & &$1$ & &$1$ & \\	
			$1$ & & $4$&  & $1$  & & $2$ & & $3$ & & $4$ & & $1$ & & $2$ & & $2$ & & $4$ & &  \\
			&$3$ & &$3$  &  &$1$ & &$5$ & &$11$ & &$3$ &  &$1$ & &$3$ & &$7$ & &$3$ & \\		
			$8$ & & $2$ & &  $2$&  & $2$  & & $18$ & & $8$ & & $2$ & & $1$ & & $10$ & & $5$ & &    \\ 
			&$5$&  &$1$ & &$3$  &  &$7$ & &$13$ & &$5$ & &$1$ &  &$3$ & &$7$ & &$13$ &  \\		
			$8$ & & $2$ & &  $1$&  & $10$  & & $5$ & & $8$ & & $2$ & & $2$ & & $2$ & & $18$ & &    \\
			&$3$& & $1$ & &$3$ & &$7$  &  &$3$ & &$3$ & &$3$ & &$1$ &  &$5$ & &$11$  & \\		
			$4$ & & $1$ & &  $2$&  & $2$  & & $4$ & & $1$ & & $4$ & & $1$ & & $2$ & & $3$ & &    \\
			&$1$ & &$1$  &  &$1$ & &$1$ & &$1$ & &$1$ &  &$1$ & &$1$ & &$1$ & &$1$ & \\	
			$0$ & & $0$&  & $0$  & & $0$ & & $0$ & & $0$ & & $0$ & & $0$ & & $0$ & & $0$ & & \\ 	
		\end{tabular} }  
		\caption{A frieze of width $7$ with quiddity sequence $(1,4,1,2,3,4,1,2,2,4)$.}\label{fig:cc_frieze}
	\end{center}
	\end{figure}

	We will use the following dependencies of the entries of a frieze and its quiddity row in further sections:
	\begin{align}\label{eq:relation_1}
	m_{i,i+r}=m_{i,i}m_{i+1,i+r}-m_{i+2,i+r},
	\end{align}
	and
	\begin{align}\label{eq:relation_2}
	m_{i,i+r}=m_{0,i-2}m_{1,i+r}-m_{0,i+r}m_{1,i-2}
	\end{align}
	stated in \cite{bpt_2016} and \cite{m-got_2015} respectively.
	
	Conway and Coxeter provided a bijection between triangulations of (regular and convex) polygons with $n+3$ vertices and integral friezes of width $n$ in \cite{CoCo_1973a} and \cite{CoCo_1973b}. These friezes are therefore counted by the Catalan numbers and we call them \textit{Conway--Coxeter friezes}, see Figure~\ref{fig:cc_frieze} for an example. 
	
	Friezes have been generalised in several ways, in particular due to connections with the theory of cluster algebras, see \cite{m_2015} for a survey. Holm and J{\o}rgensen recently generalised in \cite[Definition 0.2.]{hj_2017} the relation between friezes and polygonal dissections by studying certain friezes over $\mathcal{O}_K$, the ring of algebraic integers in the field $K=\mathbbm{Q}(\lambda_{p_1},\ldots, \lambda_{p_s})$, for $s$ and $\lambda_{p_i}$, $1\leq i \leq s$ as cited below.
	
	\begin{defn}
		Let $p \geq 3$ be an integer. A frieze is of type $\Lambda_p$ if the quiddity row consists of (necessarily) positive integral multiples of
		\begin{align*}
		\lambda_p=2 \cos\left(\frac{\pi}{p} \right).
		\end{align*}
	\end{defn}
	
	\begin{defn}
		A $p$-angulation $D$ of a (convex, regular) polygon $P$ is a set of non-intersecting diagonals that partition $P$ into a finite collection of $p$-gons.  	
	\end{defn}	
	
	\begin{rmk}\label{rmk:vertices_of_p-angulation} 
		There exist $p$-angulations of a polygon $P$ if and only if the number of vertices of $P$ is $(p\!-\!2)s\!+2$ for some $s\geq 1$. The number of $p$-angulations of $P$ is counted by the Fuss--Catalan number $\tfrac{1}{s}{(p-2)\cdot s+s \choose s-1}$, going back to Fuss--Euler (cf.~\cite{ps_2000}). 
	\end{rmk}
	One of the main results of \cite{hj_2017} is the following theorem:
	
	\begin{thm}
		There is a bijection between $p$-angulations of an $(n\!+\!3)$-gon and friezes of type $\Lambda_p$ and width $n$.
	\end{thm}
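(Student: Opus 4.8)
The plan is to encode both sides of the asserted bijection through \emph{quiddity sequences} and the $\mathrm{SL}_2$ relation governing when a periodic sequence extends to a frieze. A frieze pattern of width $n$ is determined by its quiddity row, an $(n\!+\!3)$-periodic sequence of positive reals; writing one period as $(c_1,\dots,c_{n+3})$ and putting $T_x=\left(\begin{smallmatrix}x&-1\\ 1&0\end{smallmatrix}\right)$, such a sequence extends to a frieze pattern (closing up with the required rows of $1$'s, $0$'s, and positive entries in between) exactly when
\[
T_{c_1}T_{c_2}\cdots T_{c_{n+3}}=-\,\mathrm{Id}
\]
and every continuant of a cyclically consecutive block of the $c_i$ is positive. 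On the combinatorial side, to a $p$-angulation $\Delta$ of the $(n\!+\!3)$-gon I assign the sequence whose entry at a vertex $v$ is $\lambda_p$ times the number of $p$-cells of $\Delta$ incident to $v$; its entries are positive integral multiples of $\lambda_p$, precisely the form a frieze of type $\Lambda_p$ prescribes. It then suffices to prove: (a) this sequence extends to a frieze of type $\Lambda_p$ and width $n$; and (b) every quiddity sequence of such a frieze arises from a unique $p$-angulation in this way.

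For (a) I would induct on the number $d$ of cells of $\Delta$. If $d=1$ then $\Delta$ is the $p$-gon, $n=p-3$, and the sequence is $(\lambda_p,\dots,\lambda_p)$; since $T_{\lambda_p}$ has characteristic roots $e^{\pm i\pi/p}$, which are primitive $2p$-th roots of unity, one gets $T_{\lambda_p}^{\,p}=-\mathrm{Id}$, while positivity of the relevant continuants is the Chebyshev fact that the continuant of $j$ copies of $\lambda_p$ equals $\sin\!\big((j\!+\!1)\pi/p\big)/\sin(\pi/p)$, which is positive for $0\le j\le p-2$. If $d\ge 2$, the dual tree of $\Delta$ has a leaf, i.e.\ an ear cell $C$ meeting the rest of $\Delta$ in a single diagonal $e=(u,v)$ and having its other $p-2$ vertices on the boundary, each lying in exactly one cell. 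Removing $C$ gives a $p$-angulation $\Delta'$ of the $\big((n\!+\!3)-(p\!-\!2)\big)$-gon with $d-1$ cells and quiddity sequence $(\dots,q_u,q_v,\dots)$ with $q_u,q_v$ consecutive; gluing $C$ back replaces this block by $(\dots,q_u+\lambda_p,\underbrace{\lambda_p,\dots,\lambda_p}_{p-2},q_v+\lambda_p,\dots)$. So (a) reduces to the identity
\[
T_{a+\lambda_p}\;T_{\lambda_p}^{\,p-2}\;T_{b+\lambda_p}=T_a\,T_b\qquad(a,b\in\mathbb{R}),
\]
which, using $T_{\lambda_p}^{\,p-2}=-T_{\lambda_p}^{-2}=\left(\begin{smallmatrix}1&-\lambda_p\\ \lambda_p&1-\lambda_p^{2}\end{smallmatrix}\right)$, is a one-line $2\times2$ check; the closing relation for $\Delta$ then follows from that for $\Delta'$, and positivity of the new frieze entries by the same continuant/matching argument as in the Conway--Coxeter case.

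For (b), let a frieze of type $\Lambda_p$ and width $n$ have quiddity sequence $(\lambda_p a_1,\dots,\lambda_p a_{n+3})$ with $a_i\in\mathbb{Z}_{>0}$. The crucial input is a lemma: unless $n=p-3$, the sequence has $p-2$ cyclically consecutive entries equal to $\lambda_p$ whose two flanking entries are $\ge 2\lambda_p$ (an ``ear pattern''), the $\Lambda_p$-analogue of the Conway--Coxeter fact that a quiddity sequence always has an entry $1$. Granting this, the displayed identity lets one reverse the step of (a) and un-glue an ear: the resulting sequence, shorter by $p-2$, still satisfies $\prod T=-\mathrm{Id}$ and still consists of positive integral multiples of $\lambda_p$. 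Iterating terminates at $(\lambda_p,\dots,\lambda_p)$ of length $p$, and reading the un-gluings backwards reconstructs a $p$-angulation realising the original sequence; in particular $n\equiv p-3\pmod{p-2}$ is forced, and when that fails there is neither a frieze of type $\Lambda_p$ nor a $p$-angulation of the $(n\!+\!3)$-gon. Since each un-gluing is forced — a maximal run of exactly $p-2$ consecutive $\lambda_p$'s marks an ear — the reconstruction is well-defined and inverse to the assignment of (a), giving the bijection.

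The main obstacle is the ``ear pattern'' lemma of (b). One part is easy: a run of length $\ge p-1$ is excluded because the frieze entry flanking $p-1$ consecutive $\lambda_p$'s is the continuant of $p-1$ copies of $\lambda_p$, namely $\sin(\pi)/\sin(\pi/p)=0$, which cannot occur in rows $2,\dots,n+1$ unless $n=p-3$ (and then the whole sequence is constant). The substance is ruling out that \emph{every} run of $\lambda_p$'s is shorter than $p-2$; I expect to do this by contradiction from positivity of the continuants $K(\lambda_p a_i,\dots,\lambda_p a_j)$, mirroring the Conway and Coxeter argument for $p=3$. Everything else is standard frieze bookkeeping, the leaf of a tree, and the single matrix identity above; a routine secondary point is to fix conventions consistently — the cyclic indexing matched with polygon vertices, the sign $-\mathrm{Id}$ in the closing relation, and the precise local replacement for a $p$-gon ear ($p-2$ new vertices, each in one cell and hence contributing $\lambda_p$, with $u$ and $v$ each gaining one cell).
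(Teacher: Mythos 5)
First, a point of comparison: the paper you were given does not prove this statement at all --- it is quoted as one of the main results of Holm and J{\o}rgensen \cite{hj_2017}, so there is no in-paper proof to measure your attempt against. Judged against the actual proof in \cite{hj_2017}, your skeleton is the right one: encode friezes by quiddity sequences via the relation $T_{c_1}\cdots T_{c_{n+3}}=-\mathrm{Id}$ together with positivity of continuants, use the gluing identity $T_{a+\lambda_p}T_{\lambda_p}^{p-2}T_{b+\lambda_p}=T_aT_b$ (which is correct; the $2\times 2$ computation checks out, as does $T_{\lambda_p}^{\,p}=-\mathrm{Id}$), and induct on ears of the dual tree. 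Carried out, this gives the map from $p$-angulations to friezes of type $\Lambda_p$ and its injectivity; the positivity of all continuants after gluing an ear is also only sketched, but that part is routine.

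The genuine gap is exactly where you flag it, and it is not something that follows by ``mirroring the Conway--Coxeter argument'': the ear-pattern lemma --- that the quiddity row of a frieze of type $\Lambda_p$ of width $n>p-3$ contains $p-2$ \emph{cyclically consecutive} entries equal to $\lambda_p$ --- is the entire technical content of the theorem. For $p=3$ one needs only a single entry equal to $1$, and a short monotonicity argument suffices (if every quiddity entry were $\geq 2$, each diagonal would increase strictly and could never return to $1$ and $0$). That argument generalises to show that \emph{some} entry equals $\lambda_p$, but for $p\geq 4$ you need a run of $p-2$ consecutive minimal entries, and no averaging or single-continuant positivity argument forces consecutiveness; one must also exploit that the quiddity entries lie in $\lambda_p\mathbbm{Z}_{>0}$ and that the remaining entries lie in the ring of integers $\mathcal{O}_K$, which is where the bulk of the work in \cite{hj_2017} is done. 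Without this lemma you have not proved surjectivity, i.e.\ that every frieze of type $\Lambda_p$ arises from a $p$-angulation, which is the hard half of the bijection. A secondary, fixable issue: to get a well-defined inverse by ``forced'' un-gluings you must check that the quiddity row of a $p$-angulation detects ears faithfully (every maximal run of $p-2$ entries $\lambda_p$ really bounds an ear) and that the order of un-gluing is immaterial; it is cleaner to prove injectivity and surjectivity separately than to construct a two-sided inverse this way.
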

	
	A $p$-angulation of $P$, which splits $P$ into $p$-gons $P_1$, $\ldots$, $P_s$ is mapped to a frieze $F$ constructed as follows. To each vertex $\alpha$ of $P$, we associate
	\begin{align*}
	q_{\alpha} := | \{P_j \mid P_j \text{ is incident with } \alpha \}|.
	\end{align*}
	The quiddity sequence of a frieze of a $p$-angulation of a $(p\!-\!2)s\!+2$-gon is
	\begin{align}\label{eq:quiddity_p-angulation}
	(\lambda_p q_0,\lambda_p q_1,\ldots, \lambda_p q_{(p-2)s+1}).
	\end{align}
	Note that $q_{\alpha} = \deg(\alpha)-1$, where $\deg(\alpha)$ is the number of edges that are incident to the vertex $\alpha$.
	
	Figure~\ref{fig:quadrangulation} shows a $4$-angulation with diagonals $\{1,4\}$, $\{4,9\}$ and $\{5,8\}$. Therefore $q_0=q_2=q_3=q_6=q_7=1$, $q_1=q_5=q_8=q_9=2$ and $q_4=3$. Figure~\ref{fig:lambda_4_frieze} shows the associated frieze of type $\Lambda_4$.
	\begin{figure}[!htb]
		\begin{center}
			\includegraphics[scale=0.35, page=1]{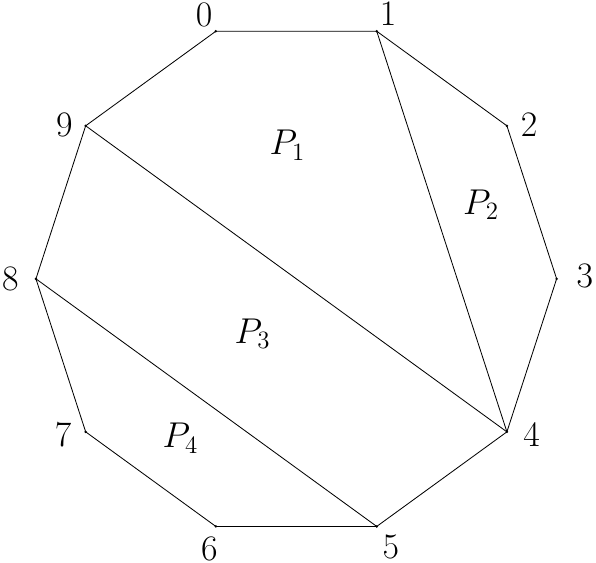}
			\caption{A $4$-angulation of a $10$-gon.}\label{fig:quadrangulation}  
		\end{center}
	\end{figure} 
	\begin{figure}
		\resizebox{\linewidth}{!}{\begin{tabular}{c c c c c c c c c c c c c c c c c c c c c}
				$0$ & & $0$&  & $0$  & & $0$ & & $0$ & & $0$ & & $0$ & & $0$ & & $0$ & & $0$ & &  \\
				&$1$ & &$1$  &  &$1$ & &$1$ & &$1$ & &$1$ &  &$1$ & &$1$ & &$1$ & &$1$ & \\	
				$\sqrt{2}$ & & $2\sqrt{2}$&  & $\sqrt{2}$  & & $\sqrt{2}$ & & $3\sqrt{2}$ & & $2\sqrt{2}$ & & $\sqrt{2}$ & & $\sqrt{2}$ & & $2\sqrt{2}$ & & $2\sqrt{2}$ & &  \\
				&$3$ & &$3$  &  &$1$ & &$5$ & &$11$ & &$3$ &  &$1$ & &$3$ & &$7$ & &$3$ & \\	
				$4\sqrt{2}$ & & $2\sqrt{2}$ & &  $\sqrt{2}$&  & $2\sqrt{2}$  & & $9\sqrt{2}$ & & $8\sqrt{2}$ & & $\sqrt{2}$ & & $\sqrt{2}$ & & $5\sqrt{2}$ & & $5\sqrt{2}$ & &    \\ 
				&$5$&  &$1$ & &$3$  &  &$7$ & &$13$ & &$5$ & &$1$ &  &$3$ & &$7$ & &$13$ &  \\			
				$8\sqrt{2}$ & & $\sqrt{2}$ & &  $\sqrt{2}$&  & $5\sqrt{2}$  & & $5\sqrt{2}$ & & $4\sqrt{2}$ & & $2\sqrt{2}$ & & $\sqrt{2}$ & & $2\sqrt{2}$ & & $9\sqrt{2}$ & &    \\
				&$3$& & $1$ & &$3$ & &$7$  &  &$3$ & &$3$ & &$3$ & &$1$ &  &$5$ & &$11$  & \\	
				$2\sqrt{2}$ & & $\sqrt{2}$ & &  $\sqrt{2}$&  & $2\sqrt{2}$  & & $2\sqrt{2}$ & & $\sqrt{2}$ & & $2\sqrt{2}$ & & $\sqrt{2}$ & & $\sqrt{2}$ & & $3\sqrt{2}$ & &    \\
				&$1$ & &$1$  &  &$1$ & &$1$ & &$1$ & &$1$ &  &$1$ & &$1$ & &$1$ & &$1$ & \\	
				$0$ & & $0$&  & $0$  & & $0$ & & $0$ & & $0$ & & $0$ & & $0$ & & $0$ & & $0$ & & \\ 	
			\end{tabular}}  
			\caption{The frieze of type $\Lambda_4$ corresponding to the $4$-angulation of Figure~\ref{fig:quadrangulation} has width $7$.}\label{fig:lambda_4_frieze}
		\end{figure}
		
		In the following two sections, we consider $p$-angulations for $p=4$ and $p=6$ and prove the main theorem (Proposition~\ref{prop:friezes}, Proposition~\ref{prop:uniqueness} and discussion in Section~\ref{sec:friezes_type_6}):
		\begin{thm}\label{thm:main}
			Let $p=4$ or $p=6$. For every $p$-angulation $D$ of a polygon $P$, there exist exactly two triangulations $T_D^{\circ}$ and $T_D^{\bullet}$ such that the frieze of type $\Lambda_p$ associated to $D$ and the Conway--Coxeter friezes of $T_D^{\circ}$ and $T_D^{\bullet}$ coincide in every second row.
		\end{thm}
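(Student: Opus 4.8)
\emph{Setup.} The plan is to pass to the standard $2\times 2$-matrix model of friezes and then to exhibit $T_D$ by refining each tile of $D$ in a way the odd rows cannot detect. For $c\in\mathbb{R}$ set $M(c)=\begin{pmatrix} c & -1\\ 1 & 0\end{pmatrix}$. If a frieze of width $n$ has quiddity sequence $(c_1,\dots,c_m)$ with $m=n+3$ (indices read cyclically), then the entries of its row $k$ are the $(k-1)$-term continuants $K(c_j,\dots,c_{j+k-2})$, $j$ varying, and these continuants are exactly the coefficients of the partial products $M(c_j)\cdots M(c_{j+N-1})$: for $N$ even the two diagonal coefficients belong to rows $N+1$ and $N-1$ (both odd rows) while the two off-diagonal ones belong to row $N$; for $N$ odd this is reversed. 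By Conway-Coxeter, a sequence of positive integers (of length $\geq 3$) is the quiddity sequence of a triangulation precisely when the product of the $M(c_i)$ over one period equals $-I$. Throughout, note that $m=(p-2)s+2$ is \emph{even}; this is used repeatedly.

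\emph{A $2$-colouring forced by $p\in\{4,6\}$.} Every diagonal $\{a,b\}$ of a $p$-angulation of $P$ splits $P$ into two polygons, each again $p$-angulated, hence each with a number of vertices $\equiv 2 \pmod{p-2}$; since $p-2\in\{2,4\}$ is even, $b-a$ is odd. Thus the boundary edges of $P$ and the diagonals of $D$ form a bipartite graph whose colour classes are the even- and the odd-indexed vertices, and around the boundary of every tile $P_i$ the two colours alternate. Consequently each quadrilateral of a $4$-angulation has both diagonals joining like-coloured vertices, and each hexagon of a $6$-angulation carries a distinguished triangle on its three even vertices.

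\emph{Construction of $T_D$ and coincidence on odd rows.} Let $T_D$ be the triangulation obtained from $D$ by inserting, in each quadrilateral ($p=4$) the diagonal on its two even vertices, and in each hexagon ($p=6$) the triangle on its three even vertices. Counting triangles at a vertex --- in the refined tile every even vertex lies in $\lambda_p^2$ triangles ($2$ for $p=4$, $3$ for $p=6$) and every odd vertex in one --- yields the quiddity sequence $(\tilde c_j)$ of $T_D$ with
\begin{align*}
\tilde c_j=\lambda_p^2\, q_j\ \ (j\ \text{even}),\qquad \tilde c_j=q_j\ \ (j\ \text{odd}),
\end{align*}
so that $\tilde c_j=\lambda_p c_j$ when $j$ is even and $\tilde c_j=\lambda_p^{-1}c_j$ when $j$ is odd, where $c_j=\lambda_p q_j$ is the quiddity of the $\Lambda_p$-frieze of $D$. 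Now put $E_j=\mathrm{diag}(1,\lambda_p)$ for $j$ even and $E_j=\mathrm{diag}(\lambda_p,1)$ for $j$ odd; a direct computation gives $M(\tilde c_j)=E_j^{-1}M(c_j)E_{j+1}$ for all $j$, hence
\begin{align*}
M(\tilde c_j)M(\tilde c_{j+1})\cdots M(\tilde c_{j+N-1})=E_j^{-1}\bigl(M(c_j)M(c_{j+1})\cdots M(c_{j+N-1})\bigr)E_{j+N}
\end{align*}
for every $j$ and $N$. When $N$ is even one has $E_{j+N}=E_j$ (the indices have the same parity), so the $\tilde c$-product is conjugate to the $c$-product by a diagonal matrix; such a conjugation leaves the diagonal coefficients unchanged, and by the Setup these are exactly the odd-row entries. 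Therefore the Conway-Coxeter frieze of $T_D$ and the $\Lambda_p$-frieze of $D$ --- which have the same width $n=(p-2)s-1$ --- coincide in every odd row. Taking $N=m$ and using $M(c_1)\cdots M(c_m)=-I$ also yields $M(\tilde c_1)\cdots M(\tilde c_m)=-I$, reconfirming that $T_D$ is a genuine triangulation. (The off-diagonal coefficients get multiplied by $\lambda_p^{\pm 1}$, so the even rows of the $\Lambda_p$-frieze are non-integral and genuinely differ from those of $T_D$; this is why the statement is confined to odd rows.)

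\emph{Uniqueness, and the main difficulty.} If a triangulation $T$ with quiddity $(b_j)$ coincides with the $\Lambda_p$-frieze of $D$ in the odd rows, then already matching row $3$ forces $b_jb_{j+1}=\lambda_p^2 q_jq_{j+1}=c_jc_{j+1}$ for all $j$; hence $b_j/c_j$ is an alternating sequence, and $b_j=u\,q_j$ for $j$ even, $b_j=v\,q_j$ for $j$ odd, with $uv=\lambda_p^2$, is forced up to that single scalar. Requiring $T$ to refine $D$ forces $u,v\geq 1$ and integrality, which leaves the two colour-class refinements ($\{u,v\}=\{1,2\}$ for $p=4$, $\{1,3\}$ for $p=6$), and normalising to the even class singles out $T_D$. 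I expect the two substantive points of the full proof to be exactly these: (i) the colouring step, which is what ties precisely $p=4,6$ to Conway-Coxeter friezes --- in tandem with $\lambda_p^2\in\mathbb{Z}$, which fails for every other $p>3$; and (ii) making ``uniquely determined'' precise, since row $3$ alone leaves the scalar free, so one must argue that the residual freedom is exactly the choice of colour class and then fix the convention. The remaining matrix bookkeeping is routine.
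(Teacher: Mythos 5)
Your existence argument is correct but follows a genuinely different route from the paper's. The paper defines $T_D$ via the noncrossing tree of black--black diagonals (resp.\ the triangle on the black vertices of each hexagon), proves the quiddity relation of Lemma~\ref{lem:number_triangles} by induction on ears, and then establishes the coincidence of odd rows in Proposition~\ref{prop:friezes} by an induction on rows using the diamond rule, carrying along the explicit shape $(\lambda a_0,\lambda a_1,\dots)$ versus $(a_0,2a_1,\dots)$ of the even rows. You instead prove bipartiteness of the diagonals by a direct congruence argument, get the quiddity relation by a purely local count inside each tile, and then obtain \emph{all} odd rows at once from the gauge identity $M(\tilde c_j)=E_j^{-1}M(c_j)E_{j+1}$ with $E_j$ diagonal of period two: since the number of vertices is even, any product of an even number of consecutive $M$'s is conjugated by a diagonal matrix, which fixes exactly the diagonal (odd-row) continuants. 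This buys a uniform treatment of $p=4$ and $p=6$, removes both inductions, and isolates why $\lambda_p^2\in\mathbb{Z}$ is the operative condition; the paper's row-by-row computation, in exchange, makes the integrality and the exact shape of the even rows explicit, which is what its uniqueness sentence leans on. (Your $T_D$ doubles the even colour class where the paper doubles the odd one; this relabelling is immaterial for existence.)

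On uniqueness both treatments are thin, and yours is at least explicit about the difficulty. Your row-$3$ analysis correctly pins a candidate quiddity down to $b_j=uq_j$ ($j$ even), $b_j=vq_j$ ($j$ odd) with $uv=\lambda_p^2$, but the step ``requiring $T$ to refine $D$'' imports a hypothesis absent from Theorem~\ref{thm:main}; and even after integrality is forced (each parity class contains an ear vertex with $q_j=1$, so $u,v\in\mathbb{Z}_{>0}$), \emph{both} assignments of $\{u,v\}$ to the two colour classes yield genuine triangulations whose Conway--Coxeter friezes agree with $F_D$ in every odd row. So ``uniquely determined'' holds only after fixing the colour-class convention; the paper's one-line argument (``all even rows are determined'') fixes the same convention implicitly. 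This is a defect shared with the source rather than a flaw specific to your proof, but if you present the uniqueness claim you should state precisely in which sense $T_D$ is unique.
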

		
		Hence we give a partial answer to question (i) of \cite{hj_2017}, which asks whether there is a useful characterisation of friezes of type $\Lambda_p$ for $p\geq 4$: We show that there exist triangulations, whose associated Conway--Coxeter friezes recover the friezes of type $\Lambda_p$ of a given $p$-angulation for $p=4$ or $p=6$. Lemma~\ref{lem:power_of_lambda_p} at the end of Section~\ref{sec:friezes_type_6} shows that Theorem~\ref{thm:main} cannot hold for any other value of $p$. 
		
		Finally, the last section deals with associated Farey graphs to friezes of type $\Lambda_p$ for general $p$, which is a generalisation of results of \cite{m-got_2015} for $p=3$. We give a formula for the entries of the frieze of type $\Lambda_p$ in terms of values of the vertices of the corresponding $p$-angulated path in the Farey graph and vice versa. Combining these formulas with the results of previous sections gives direct links between Farey graphs of Conway--Coxeter friezes and Farey graphs of friezes of type $\Lambda_4$ and $\Lambda_6$.

		\section{Friezes of type $\Lambda_4$ and associated Conway--Coxeter friezes.}\label{sec:friezes_type_4}
		There is a bijection between quadrangulations on the $2s+2$-gon  and noncrossing trees on $s+1$ vertices as defined in \cite{dp_1993}.

		We follow \cite{c_2015} for describing this bijection. Assume that the vertices of the $2s+2$-gon have been coloured black and white alternating, with odd vertices coloured black and even vertices coloured white. 
		Then every edge of the quadrangulation connects one black and one white vertex. Every quadrangle contains a unique black--black diagonal. The collection of these diagonals form a noncrossing tree. Conversely, by drawing a noncrossing tree using the black vertices of the $2s+2$-gon, one can consider all black--white edges that do not cross the edges of the noncrossing tree. This gives back the quadrangulation. We can define another bijection between trees and quadrangulations in the same way by considering the white vertices (and white--white diagonals) of the quadrangulation. These constructions are shown in Figure~\ref{fig:bij_trees_quadrangulations}.
		
		\begin{figure}[!htb]
			\begin{center}
				\includegraphics[scale=0.33, page=2]{quadrangulation}
				\hspace{5 ex}
				\includegraphics[scale=0.33, page=3]{quadrangulation}
				\caption{Two bijections between quadrangulations and noncrossing trees.}\label{fig:bij_trees_quadrangulations} 
			\end{center}
		\end{figure}
		
		\begin{defn}\label{def:associated_triangulation}	
			To every quadrangulation $D$ of a polygon $P$ we associate the triangulation $T_D^{\bullet}$ consisting of diagonals defined by the quadrangulation $D$ of $P$ together with the edges of the corresponding noncrossing tree on the black vertices (and similarly $T_D^{\circ}$ with the edges on the white vertices instead).
		\end{defn}
		
		Figure~\ref{fig:triangulation_quadrangulation} shows the associated triangulation of quadrangulation of Figure~\ref{fig:quadrangulation}.
		\begin{figure}
			\begin{center}
				\includegraphics[scale=0.35, page=5]{quadrangulation}
				\hspace{5 ex}
				\includegraphics[scale=0.35, page=6]{quadrangulation}
				\caption{The triangulation $T_{D}^{\bullet}$ (left) and $T_D^{\circ}$ (right) of the quadrangulation $D$ in Figure~\ref{fig:quadrangulation}.}\label{fig:triangulation_quadrangulation} 
			\end{center}
		\end{figure} 	
		For the triangulation $T_{D}^{\bullet}$ associated to a quadrangulation $D$, define for every vertex  $0 \leq \alpha \leq 2s+1$,
		\begin{align*}
		t_{\alpha}^{\bullet} := | \{P_j \mid P_j \text{ is incident with } \alpha \}|.
		\end{align*}
		The quiddity sequence of the Conway--Coxeter frieze is then by definition \newline $(t_0^{\bullet},t_1^{\bullet}, \ldots,t_{2s+1}^{\bullet})$. Figure~\ref{fig:cc_frieze} shows the Conway--Coxeter frieze of the triangulation $T_D^{\bullet}$ of Figure~\ref{fig:triangulation_quadrangulation}. Note that every second row coincides with the corresponding row of the frieze of type $\Lambda_4$ in Figure~\ref{fig:lambda_4_frieze}.
		
		In order to prove the following lemma, we need an equivalent combinatorial object to a $p$-angulation and its properties.
		
		The {\em dual graph} of a $p$-angulation $D$ of a polygon $P$ has a vertex for every subpolygon $P_j$ and two vertices are connected by an edge if the corresponding $p$-gons share a common edge. An {\em ear} is a $p$-gon $P_j$ which consists of $p\!-\!1$ boundary edges of $P$ and exactly one diagonal of $D$.
		
		The dual graph of any $p$-angulation $D$ of a polygon $P$ is a tree. Every tree has at least two leaves and every leaf of the dual graph of $D$ corresponds to an ear of $D$.
		
		Recall that for a quadrangulation $D$ of a polygon into $s$ quadrilaterals,  $q_{\alpha}$ denotes the number of quadrilaterals incident with vertex $\alpha$ for $0\leq \alpha \leq 2s+1$. We can now state the following lemma:
		\begin{lem}\label{lem:number_triangles}
			For every $4$-angulation $D$ and its associated triangulation $T_D^{\bullet}$, the following equalities hold:
			\begin{align}\label{eq:faces_vertices}
			t_{\alpha}^{\bullet} = \left\{ \begin{matrix}
			q_{\alpha} & \text{ if } \alpha \text{ is even} \\
			2q_{\alpha} & \text{ otherwise.}
			\end{matrix} \right.
			\end{align} 	
		\end{lem}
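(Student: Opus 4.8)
The plan is to exploit the already-recorded structural facts about the black/white colouring: every edge of a quadrangulation $D$ joins a black vertex to a white one, so the four vertices of each quadrilateral $P_j$ alternate in colour around $P_j$, and each $P_j$ has a unique black--black diagonal. Fixing the convention (consistent with Figures~\ref{fig:bij_trees_quadrangulations} and~\ref{fig:triangulation_quadrangulation}) that the black vertices are exactly the odd-indexed ones, the associated triangulation $T_D$ of Definition~\ref{def:associated_triangulation} is $D$ together with the edges of the noncrossing tree $N$, where $N$ is the set of black--black diagonals, one per quadrilateral. I would first observe that each edge of $N$ lies in the interior of exactly one quadrilateral $P_j$ and bisects it into two triangles.

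Next I would verify that $T_D$ is genuinely a triangulation whose triangular faces are precisely the $2s$ triangles obtained by bisecting each $P_j$ along its black--black diagonal. This is a short count: $D$ uses $s-1$ diagonals of $P$ and $N$ contributes $s$ further diagonals, for a total of $2s-1 = (2s+2)-3$; since the $s$ edges of $N$ lie in pairwise interior-disjoint quadrilaterals they cross neither each other nor the diagonals of $D$, so $T_D$ is a triangulation, and its faces refine the quadrilateral faces of $D$ exactly two-to-one. In particular, a triangle of $T_D$ incident with a vertex $\alpha$ arises from the bisection of a quadrilateral of $D$ incident with $\alpha$.

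The lemma then follows by a local count at each vertex $\alpha$ (note $q_\alpha \ge 1$ since $D$ covers all vertices of $P$). If $\alpha$ is even, hence white, then $\alpha$ is an endpoint of none of the black--black diagonals; so in each of the $q_\alpha$ quadrilaterals incident with $\alpha$ it is one of the two ``white corners'' and therefore lies in exactly one of the two sub-triangles of that quadrilateral, giving $t_\alpha = q_\alpha$. If $\alpha$ is odd, hence black, then in each incident quadrilateral $P_j$ it is an endpoint of the black--black diagonal of $P_j$, hence lies in both sub-triangles of $P_j$, giving $t_\alpha = 2q_\alpha$.

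I do not expect a serious obstacle: the content is entirely combinatorial bookkeeping. The one place that needs care is pinning down the dictionary between the alternating black/white colouring and the labels $0,\dots,2s+1$, i.e.\ checking that the black vertices, which carry the doubled count, are the odd-indexed ones and not the even-indexed ones; this in turn rests on the parity fact that every diagonal of $D$ splits off an even number of vertices, so its endpoints have opposite parity. Together with the routine check that $D\cup N$ is a triangulation with no extra faces, this is all that is needed before the vertex-by-vertex count becomes immediate.
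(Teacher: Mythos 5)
Your proof is correct, but it takes a genuinely different route from the paper. The paper argues by induction on the number $s$ of quadrilaterals: it cuts an ear $\{\alpha,\alpha+3\}$ off $D$, applies the induction hypothesis to the smaller quadrangulation $D'$, and then tracks how $t_\beta$ and $q_\beta$ change at the four affected vertices, with a case distinction according to the colour of $\alpha$. You instead give a direct local count: after checking that $D$ together with the $s$ black--black diagonals has $(s-1)+s=2s-1=(2s+2)-3$ pairwise non-crossing diagonals, so that $T_D$ is a triangulation whose faces are exactly the two halves of each quadrilateral, you observe that a white (even) vertex lies in exactly one of the two sub-triangles of each incident quadrilateral while a black (odd) vertex, being an endpoint of the bisecting diagonal, lies in both. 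Your version avoids the induction and the ear case analysis entirely, and it makes the origin of the factor $2$ transparent; it also adapts verbatim to the hexagonal case of the later section (each hexagon splits into four triangles, with a white corner in one of them and a black corner in three), whereas the paper only gestures at ``the same arguments''. The one point you rightly flag --- that the black vertices are the odd-indexed ones --- is settled by your parity observation that every diagonal of a $4$-angulation has endpoints of opposite parity, consistent with the paper's explicit colouring convention in the $\Lambda_6$ section. No gap.
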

		\begin{proof}
			Let the number of vertices of the polygon be $2s+2$. 
			
			The proof is done by induction on $s$.
			For $s=1$, the statement is true. 
			
			Let $s>1$ and let $D$ be a quadrangulation of the polygon. Let $\{\alpha,\alpha+3\}$ be an ear of $D$. Cutting off this ear from $D$, as shown in Figure~\ref{fig:cutting_quadrilateral}, gives a quadrangulation $D'$ with $2s$ vertices and hence equation \eqref{eq:faces_vertices} holds for all vertices $\beta \in [0,2s+1]\setminus\{\alpha,\alpha+1,\alpha+2,\alpha+3\}$ of $T_D^{\bullet}$ by the induction hypothesis.
			
			Denote the number of triangles and quadrilaterals incident with vertices $\alpha$ and $\alpha+3$ in $T_{D'}^{\bullet}$ and $D'$ by $t_{\alpha}'$, $q_{\alpha}'$ and $t_{\alpha+3}'$, $q_{\alpha+3}'$   respectively.  There are 2 cases, as vertex $\alpha$ is either coloured black or white. If $\alpha$ is odd and hence coloured black, then $t_{\alpha}'= 2q_{\alpha}'$ and $t_{\alpha+3}'=q_{\alpha+3}'$. As the triangulation $T_D^{\bullet}$ includes the diagonal $\{\alpha,\alpha+2\}$, we obtain the equations
			\begin{align*}
			t_{\alpha}^{\bullet}&=t_{\alpha}'+2=2(q_{\alpha}'+1) \\ t_{\alpha+1}^{\bullet}&=1=q_{\alpha+1} \\
			t_{\alpha+2}^{\bullet}&=2=2q_{\alpha+2} \\ t_{\alpha+3}^{\bullet}&=t_{\alpha+3}'+1=q_{\alpha+3}'+1.
			\end{align*}

			The number of incident quadrilaterals increases by $1$ only for the $4$ vertices $\alpha,\alpha+1,\alpha+2,\alpha+3$ in $D$ compared to $D'$, and hence the claimed equalities \eqref{eq:faces_vertices} hold for all vertices.
			\begin{figure}[htb]
				\begin{center}
					\includegraphics[scale=0.35, page=7]{quadrangulation}
					\caption{Cutting off an ear from $D$ gives a quadrangulation $D'$ on $2s$ vertices. Here $\alpha$ is odd.}\label{fig:cutting_quadrilateral} 
				\end{center}
			\end{figure}
			
			The argument for even $\alpha$ is similar.  
		\end{proof}
		\begin{rmk}
			Similarly, we obtain that for any quadrangulation $D$ and its associated triangulation $T_D^{\circ}$, the following equalities hold:
			\begin{align}
			t_{\alpha}^{\circ} = \left\{ \begin{matrix}
			q_{\alpha} & \text{ if } \alpha \text{ is odd} \\
			2q_{\alpha} & \text{ otherwise.}
			\end{matrix} \right.
			\end{align} 	 
		\end{rmk}
		\begin{prop}\label{prop:friezes}
			Let $D$ be a quadrangulation of $P$ and $F_{D}$ the associated frieze of type $\Lambda_4$. If $T_D^{\bullet}$ is the associated triangulation of $D$  and $F_{T_D^{\bullet}}$ the corresponding Conway--Coxeter frieze, then  $F_{D}$ and $F_{T_D^{\bullet}}$ coincide in every second row.
		\end{prop}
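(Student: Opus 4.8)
The plan is to leverage Lemma~\ref{lem:number_triangles} together with the continuant/matrix description of friezes. Recall that a frieze of width $n$ is completely determined by its quiddity sequence: once the quiddity row is fixed, every remaining entry is forced by the unimodular rule on diamonds. The quiddity sequence of $F_D$ is $(\lambda_4 q_0, \lambda_4 q_1, \ldots, \lambda_4 q_{2s+1})$ with $\lambda_4 = \sqrt 2$, while the quiddity sequence of $F_{T_D}$ is $(t_0, t_1, \ldots, t_{2s+1})$, where by Lemma~\ref{lem:number_triangles} we have $t_\alpha = q_\alpha$ for $\alpha$ even and $t_\alpha = 2q_\alpha$ for $\alpha$ odd. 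So the claim is a purely arithmetic statement: if one frieze has quiddity $(\sqrt2 a_0, \sqrt2 a_1, \sqrt2 a_2, \ldots)$ and the other has quiddity $(a_0, 2a_1, a_2, 2a_3, \ldots)$, then the two friezes agree in every odd-numbered row.

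\textbf{Key steps.} First I would set up the transfer-matrix formalism: for a frieze with quiddity sequence $(c_0, c_1, \ldots)$, the entry in position $(\alpha, \alpha+k)$ (suitably indexed so that row number tracks $k$) is a continuant $K(c_\alpha, c_{\alpha+1}, \ldots, c_{\alpha+k-1})$, equivalently the $(1,1)$-entry (or a fixed entry) of the product $\prod_{j=\alpha}^{\alpha+k-1}\begin{pmatrix} c_j & -1 \\ 1 & 0\end{pmatrix}$. Second, I would record the elementary identity
\begin{align*}
\begin{pmatrix} \sqrt2\, a & -1 \\ 1 & 0\end{pmatrix}\begin{pmatrix} \sqrt2\, b & -1 \\ 1 & 0\end{pmatrix}
= \begin{pmatrix} 2ab - 1 & -\sqrt2\, a \\ \sqrt2\, b & -1\end{pmatrix},
\end{align*}
and compare it with
\begin{align*}
\begin{pmatrix} a & -1 \\ 1 & 0\end{pmatrix}\begin{pmatrix} 2b & -1 \\ 1 & 0\end{pmatrix}
= \begin{pmatrix} 2ab - 1 & -a \\ 2b & -1\end{pmatrix}.
\end{align*}
The two products differ only by conjugation by $\mathrm{diag}(1,\sqrt2)$ (and a harmless global transpose/reindex), so grouping the factors of $F_D$ in consecutive pairs starting at an even index, and the factors of $F_{T_D}$ in the same consecutive pairs, one sees that a product over an \emph{even} number of such factors — which is exactly what computes an entry of $F_{T_D}$ lying in an odd row, since odd row number $=$ even step length in the $(\alpha,\alpha+k)$ indexing — matches the corresponding product for $F_D$ up to this diagonal conjugation, and the relevant matrix entry (the one equal to the frieze value) is invariant under it. Third, I would check the bookkeeping that the ``odd row'' of the Conway–Coxeter frieze $F_{T_D}$, indexed as in the figures (row $0$ all zeros, row $1$ all ones, quiddity row $=$ row $2$, \ldots, row $2s+3$ all zeros), corresponds to products of an even number of transfer matrices, and that the entries of $F_D$ of type $\Lambda_4$ (width $7$ in the example, i.e.\ the $9$ non-trivial-plus-trivial rows) line up index-for-index with these odd rows — this is visible in Figures~\ref{fig:lambda_4_frieze} and~\ref{fig:cc_frieze}. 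Finally I would note that $F_D$ really is a genuine frieze of width $2s+1$ with the stated quiddity, which is Theorem~2 (the Holm–J\o rgensen bijection) specialised to $p=4$.

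\textbf{Main obstacle.} The conceptual content is the two-line matrix identity above, so I do not expect difficulty there; the delicate part is \emph{the indexing}. I must be careful that ``every odd row'' in the statement refers to the same rows in both friezes, that pairing the transfer matrices two-at-a-time is compatible with the periodicity/glide symmetry of the frieze (the product around the whole polygon must come out to $\pm\mathrm{Id}$ in both cases, which is where the parity $2s{+}2$ of the number of vertices enters), and that the ``value-carrying'' entry of the $2\times2$ product is the one fixed by conjugation with $\mathrm{diag}(1,\sqrt2)$ rather than one of the off-diagonal entries scaled by $\sqrt2$. A clean way to avoid sign and transpose headaches is to phrase the whole argument in terms of the continuant recursion $K_k = c_{\alpha+k-1}K_{k-1} - K_{k-2}$ directly: prove by induction on $k$ that the depth-$2m$ continuant of the $\Lambda_4$ quiddity equals the depth-$2m$ continuant of the $T_D$ quiddity, and that the depth-$(2m{+}1)$ continuant of the former is $\sqrt2$ times a ``mixed'' continuant of the latter — only the even-depth statement is needed for the proposition, but carrying the odd-depth statement along is what makes the induction close.
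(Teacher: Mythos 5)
Your proposal is correct, but it takes a genuinely different route from the paper. The paper proceeds by a direct induction on the rows using the diamond rule: it computes row $3$ explicitly, then carries a two-part induction hypothesis (odd rows of $F_D$ and $F_{T_D}$ coincide; the even row $2j$ of $F_D$ has the form $\ldots,\lambda a_0,\lambda a_1,\ldots$ while that of $F_{T_D}$ has the form $\ldots,a_0,2a_1,a_2,2a_3,\ldots$ with the same integers $a_\alpha$), alternately passing from an odd row to the next even row and back, with the factor $\lambda^2=2$ cancelling the $1$-versus-$2$ alternation at each even-to-odd step; integrality and positivity of the $c_k$ are checked via the identity \cite[(6.6)]{Co_1971}. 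Your transfer-matrix/continuant argument packages the same cancellation into the single identity that the product of two consecutive matrices $\bigl(\begin{smallmatrix}\sqrt2 a & -1\\ 1 & 0\end{smallmatrix}\bigr)\bigl(\begin{smallmatrix}\sqrt2 b & -1\\ 1 & 0\end{smallmatrix}\bigr)$ is conjugate to $\bigl(\begin{smallmatrix}a & -1\\ 1 & 0\end{smallmatrix}\bigr)\bigl(\begin{smallmatrix}2b & -1\\ 1 & 0\end{smallmatrix}\bigr)$ by $\mathrm{diag}(1,\sqrt2)$, a conjugation that fixes the value-carrying $(1,1)$-entry; your identity checks out (and works for pairs starting at either parity, with $S$ or $S^{-1}$, which you need since an odd row contains continuants starting at both even and odd indices). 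Your approach is cleaner and avoids the induction on rows entirely, and it makes transparent why only $p=4$ and $p=6$ work ($\lambda_p^2\in\mathbbm{Z}$). What it does not deliver, and the paper's inductive proof does, is the explicit description of the even rows of both friezes (equations (\ref{eq:even_row_quadrangle}) and (\ref{eq:even_row_triangle})) together with the integrality of the $a_\alpha$; the paper uses this additional information immediately after the Proposition to deduce the uniqueness claim in Theorem \ref{thm:main}, so if you adopted your route you would still want to record your auxiliary odd-depth statement (the ``$\sqrt2$ times a mixed continuant'' clause) explicitly, as it is exactly the even-row description in disguise.
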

		\begin{proof}
			Set $\lambda:= \lambda_4$ and let the number of vertices of the polygon be $2s+2$.
			By Lemma~\ref{lem:number_triangles} the quiddity sequence of $T_D^{\bullet}$ for $D$ is
			\begin{align*}
			(t_0^{\bullet},t_1^{\bullet},t_2^{\bullet},t_3^{\bullet},\ldots,t_{2s}^{\bullet},t_{2s+1}^{\bullet})&=(q_0,2q_1,q_2,2q_3,\ldots,q_{2s},2q_{2s+1}).
			\end{align*}
			Let $(m_{i,j}^{\bullet})_{i,j\in \mathbbm{Z}}$ be the entries of the Conway--Coxeter frieze $F_{T_D^{\bullet}}$ defined by this quiddity sequence. Let $(m_{i,j})_{i,j\in \mathbbm{Z}}$ be the entries of the frieze $F_D$ of type $\Lambda_4$ with quiddity sequence $m_{\alpha,\alpha}=\sqrt{2}q_{\alpha}$ for $0\leq \alpha \leq 2s+1$. This yields
			\begin{align}\label{eq:quiddity_correspondence}
			m_{i,i}^{\bullet}=\left\{ \begin{matrix}
			\sqrt{2}m_{i,i} & \text{ if } \alpha \text{ is odd} \\
			\tfrac{1}{\sqrt{2}}m_{i,i} & \text{ otherwise.}
			\end{matrix} \right.
			\end{align}
			We have to show that for $i\in \mathbbm{Z}$ and $k\geq 0$,
			\begin{align}\label{eq:odd_rows}
			m_{i,i+2k+1}=m_{i,i+2k+1}^{\bullet}.
			\end{align}
			In order to prove this identity, we show that
			\begin{align}\label{eq:even_rows}
			m_{i,i+2k}^{\bullet}=\left\{ \begin{matrix}
			\sqrt{2} m_{i,i+2k} & \text{ if } i \text{ is odd} \\
			\tfrac{1}{\sqrt{2}}m_{i,i+2k} & \text{ otherwise.}
			\end{matrix} \right.
			\end{align}
			This holds for $k=0$ by equation~\eqref{eq:quiddity_correspondence} for all $i\in \mathbbm{Z}$.
			
			We prove equations~\eqref{eq:odd_rows} and \eqref{eq:even_rows}  by induction on $k$.
			Let $k=0$. By the unimodular rule we obtain that
			\begin{align*}
			m_{i,i+1}=2q_iq_{i+1}-1=m_{i,i}^{\bullet}m_{i+1,i+1}^{\bullet}-m_{i+2,i+1}^{\bullet}=m_{i,i+1}^{\bullet}.
			\end{align*}  
			Now assume that for $k$ equations~\eqref{eq:odd_rows} and \eqref{eq:even_rows} are fulfilled. The induction hypothesis and the unimodular rule yield
			\begin{align*}
			m_{i,i+2k+2}^{\bullet}=\frac{m_{i,i+2k}^{\bullet}m_{i+1,i+2k+1}^{\bullet}-1}{m_{i+1,i+2k}^{\bullet}}=\left\{ \begin{matrix}
			\sqrt{2}m_{i,i+2k+2} & \text{ if } i \text{ is odd} \\
			\tfrac{1}{\sqrt{2}}m_{i,i+2k+2} & \text{ otherwise.}
			\end{matrix} \right.
			\end{align*}
			Finally, by using the unimodular rule again, we find
			\begin{align*}
			m_{i,i+2k+3}^{\bullet}&=\frac{m_{i,i+2k+1}^{\bullet}m_{i+1,i+2k+2}^{\bullet}-1}{m_{i+1,i+2k+1}^{\bullet}}=\\
			&=\frac{\tfrac{1}{\sqrt{2}}\sqrt{2}m_{i,i+2k+1}m_{i+1,i+2k+2}-1}{m_{i+1,i+2k+1}}=m_{i,i+2k+3}
			\end{align*}
			for all $i\in \mathbbm{Z}$ by periodicity. As $(m_{i,j}^{\bullet})_{i,j\in \mathbbm{Z}}$ are entries of a Conway--Coxeter frieze, we conclude that the entries of $F_D$ are positive integers in every second row.	
		\end{proof}
		\begin{rmk}\label{rmk:triangulation_circ}
			The proof for $(m_{i,j}^{\circ})_{i,j\in \mathbbm{Z}}$ uses similar arguments. Using the same setting as in the proof of Proposition~\ref{prop:friezes}, the only difference concerns the entries of the rows $(m_{i,i+2k}^{\circ})_{i\in \mathbbm{Z},k\in \mathbbm{Z}_{\geq 0}}$, as
			\begin{align*}
			m_{i,i+2k}^{\circ}=\left\{ \begin{matrix}
			\tfrac{1}{\sqrt{2}} m_{i,i+2k} & \text{ if } i \text{ is odd} \\
			\sqrt{2} m_{i,i+2k} & \text{ otherwise.}
			\end{matrix} \right.
			\end{align*}
		\end{rmk}
		\begin{prop}\label{prop:uniqueness}
			Let $D$, $T_D^{\bullet}$ and $T_D^{\circ}$ be defined as above. Then $F_{T_{D}^{\bullet}}$ and $F_{T_{D}^{\circ}}$ are the only friezes, which coincide with $F_{D}$ in every second row. 
		\end{prop}
		\begin{proof}
			Assume that there exists a triangulation $T\notin \{T_D^{\bullet}, T_D^{\circ}\}$ of the $2s+2$-gon such that $F_T=:(m_{i,j}')_{i,j\in \mathbbm{Z}}$ and $F_D=(m_{i,j})_{i,j\in \mathbbm{Z}}$ coincide in every second row. Then in particular
			\begin{align}\label{eq:further_triangulation}
			m_{i,i+1}'=m_{i,i+1} \Leftrightarrow m_{i,i}'=\frac{2q_iq_{i+1}}{m_{i+1,i+1}'}
			\end{align}
			for all $i\in \mathbbm{Z}$. The entries $m_{i,i}'$ have to be positive integers. Furthermore, they have to be multiples (or divisors) of $q_{i,i}$ by equation~\eqref{eq:further_triangulation}.  As $T\neq T_D^{\bullet}$, there exists an index $0\leq j \leq s$ s.t. 
			\begin{align*} 
			m_{2j,2j}'\neq m_{2j,2j}^{\bullet}=q_{2j} \text{  or  } m_{2j+1,2j+1}'\neq m_{2j+1,2j+1}^{\bullet}=2_{q_{2j+1}}.
			\end{align*}
			In both cases, this entry of $F_T$ can either be greater or less than the entry of $F_{T_D^{\bullet}}$.
			As the entries of the Conway--Coxeter frieze are positive integers, we have the following cases for some integer $k>1$:
			\begin{itemize}
				\item[(a)] $m_{2j,2j}'=k\cdot q_{2j}$ (and hence $m_{2j+1,2j+1}'=\tfrac{2q_{2j+1}}{k}$)
				\item[(b)] $m_{2j,2j}'=\tfrac{q_{2j}}{k}$ (and hence $m_{2j+1,2j+1}'= 2k\cdot q_{2j+1}$).
			\end{itemize}
			In both cases, all entries $m_{2i,2i}'$ and  $m_{2i+1,2i+1}'$ ($i\in \mathbbm{Z}$) obey these relations simultaneously via equation~\eqref{eq:further_triangulation}. 
			
			Let $\{\alpha,\alpha+1,\alpha+2,\alpha+3\}$ be an ear of $D$. Then $q_{\alpha+1}=q_{\alpha+2}=1$ and both cases~(a) and (b) give a contradiction, as the quiddity sequence of $(m_{i,j}')_{i,j\in \mathbbm{Z}}$  would have a non-integer entry $m_{\alpha+2,\alpha+2}'=\tfrac{2}{k}$ (case~(a)) or $m_{\alpha+1,\alpha+1}'=\tfrac{1}{k}$ (case~(b)) for odd $\alpha$ and  $m_{\alpha+1,\alpha+1}'=\tfrac{2}{k}$ (case~(a)) for $m_{\alpha+2,\alpha+2}'=\tfrac{1}{k}$ (case~(b)) for even $\alpha$.
			It only remains to consider $k=2$ in the case~(a), as case~(b) gives a contradiction, as $\tfrac{1}{2}$ is not an integer. Case~(a) gives the entries of the Conway--Coxeter frieze of $T_D^{\circ}$, which is also a contradiction.
		\end{proof}
		Proposition~\ref{prop:friezes} and Proposition~\ref{prop:uniqueness} give Theorem~\ref{thm:main} for $p=4$.

		\section{Friezes of type $\Lambda_6$ and associated Conway--Coxeter friezes.}\label{sec:friezes_type_6}
		We use similar arguments as in the previous section, but we have to associate different triangulations to a $6$-angulation $D$ of a polygon $P$ with $4s+2$ vertices. 
		
		We colour even vertices white and odd vertices black again. Then every diagonal of $D$ connects a black and a white vertex. We triangulate every $6$-gon $P_j$ by inserting edges between all pairs of black vertices (or white vertices respectively). An example is shown in Figure~\ref{fig:hexagonal_dissection}, where the bold edges are diagonals of $D$.
		
		\begin{figure}[!htb]
			\begin{center}
				\includegraphics[scale=0.3, page=1]{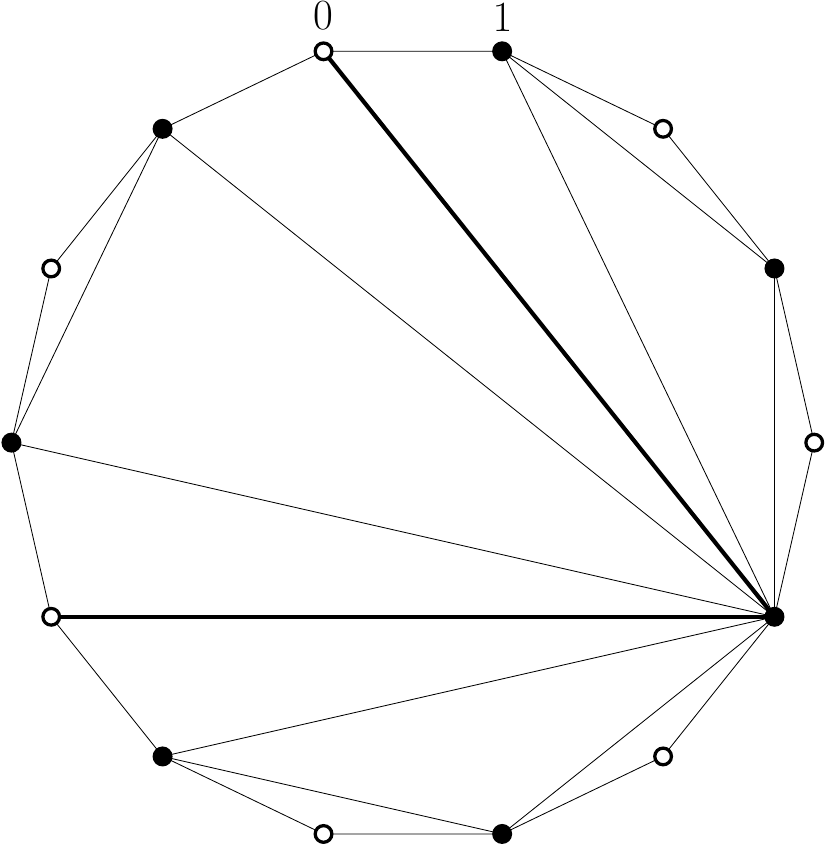}
				\hspace{5 ex}
				\includegraphics[scale=0.3, page=2]{hexagonal}
				\caption{The triangulations $T_{D}^{\bullet}$ (left) and $T_{D}^{\circ}$ (right) associated to a $6$-angulation $D$ (bold diagonals).}\label{fig:hexagonal_dissection} 
			\end{center}
		\end{figure}
		
		Let  $q_{\alpha}$ be the number of $6$-gons incident with a vertex $0\leq \alpha \leq 4s+1$ in $D$ and $t^{\bullet}_{\alpha}$ (or $t^{\circ}_{\alpha}$)  be the number of triangles incident with vertex $\alpha$ as in the previous section. The quiddity sequences of the frieze of type $\Lambda_6$ of the $6$-angulation $D$ of Figure \ref{fig:hexagonal_dissection} (bold diagonals) and of $T_D^{\bullet}$ and $T_{D}^{\circ}$  are
		\begin{align*}
		(2\sqrt{3}, \sqrt{3}, \sqrt{3}, \sqrt{3}, \sqrt{3},  3\sqrt{3}, \sqrt{3}, \sqrt{3}, \sqrt{3}, \sqrt{3}, 2\sqrt{3}, \sqrt{3}, \sqrt{3}, \sqrt{3})
		\end{align*}
		and
		\begin{align*}
		(2, 3, 1, 3, 1, 9, 1, 3, 1, 3, 2, 3, 1, 3), \ (6, 1, 3, 1, 3, 3, 3, 1, 3, 1, 6, 1, 3, 1)
		\end{align*}
		respectively.
		We obtain by the same arguments as in Lemma~\ref{lem:number_triangles}, that for $T_D^{\bullet}$ and $T_D^{\circ}$ and $0\leq \alpha \leq 4s+1$, 
		\begin{align*}
		t_{\alpha}^{\bullet} = \left\{ \begin{matrix}
		q_{\alpha} & \text{ if } \alpha \text{ is even} \\
		3q_{\alpha} & \text{ otherwise}
		\end{matrix} \right. , \ 
		t_{\alpha}^{\circ} = \left\{ \begin{matrix}
		q_{\alpha} & \text{ if } \alpha \text{ is odd} \\
		3q_{\alpha} & \text{ otherwise.}
		\end{matrix} \right.
		\end{align*} 
		Defining $\lambda := \lambda_6=2 \cos\left(\frac{\pi}{6} \right)= \sqrt{3}$, and proving that the entries $(m_{i,j})_{i,j\in \mathbbm{Z}}$ of the Conway--Coxeter frieze $F_{T_D^{\bullet}}$ fulfil the equations
		\begin{align*}
		m_{i,i+2k}^{\bullet}=\left\{ \begin{matrix}
		\sqrt{3} m_{i,i+2k} & \text{ if } i \text{ is odd} \\
		\tfrac{1}{\sqrt{3}}m_{i,i+2k} & \text{ otherwise}
		\end{matrix} \right. , \ 
		m_{i,i+2k}^{\circ}=\left\{ \begin{matrix}
		\sqrt{3} m_{i,i+2k} & \text{ if } i \text{ is even} \\
		\tfrac{1}{\sqrt{3}}m_{i,i+2k} & \text{ otherwise}
		\end{matrix} \right.
		\end{align*}
		in the same way as in Proposition~\ref{prop:friezes} and Remark~\ref{rmk:triangulation_circ}, we obtain that the friezes $F_D$ of type $\Lambda_6$ and the Conway--Coxeter frieze $F_{T_D^{\bullet}}$ (and $F_{T_D^{\circ}}$) of the assigned triangulation $T_D^{\bullet}$ (and $T_D^{\circ}$ respectively) coincide in every second row.
		
		In order to show that these are the only triangulations with that property, we have to consider a frieze $(m_{i,j}')_{i,j\in \mathbbm{Z}}$ arising from a triangulation $T\notin  \{T_D^{\bullet}, T_D^{\circ} \}$ which coincides with $F_D$ in every second row. By similar arguments as in Proposition~\ref{prop:uniqueness}, we have to show that the cases
		\begin{itemize}
			\item[(a)] $m_{2j,2j}'=k\cdot q_{2j}$ (and hence $m_{2j+1,2j+1}'=\tfrac{3q_{2j+1}}{k}$)
			\item[(b)] $m_{2j,2j}'=\tfrac{q_{2j}}{k}$ (and hence $m_{2j+1,2j+1}'= 3k\cdot q_{2+1}$)
		\end{itemize}
		where $j\in \mathbbm{Z}$ do not give the quiddity sequence of a Conway--Coxeter frieze beside $(m_{i,i}^{\bullet})$ and $(m_{i,i}^{\circ})$ (where $0\leq i \leq 4s+1$). If $k\notin\{1,3\}$, we get non-integer values for some of the entries $m_{\alpha_2,\alpha_2}',\ldots m_{\alpha_5,\alpha_5}'$ which belong to an ear $\{\alpha_1,\ldots,\alpha_6\}$ of $D$, which is a contradiction. 
		For $k=1$, both cases yield the quiddity row of $F_{T_D^{\bullet}}$ and for $k=3$, the first case gives the quiddity row of $F_{T_D^{\circ}}$ and the second case yields a contradiction, as  $\tfrac{1}{3}$ is not an integer. Hence Theorem~$\ref{thm:main}$ is fulfilled for $p=6$.
		
		We close this section by showing that Theorem~$\ref{thm:main}$ cannot hold for any other value of $p$.  We have to prove the following lemma. 
		\begin{lem}\label{lem:power_of_lambda_p}
			Let $p>3$ and $p\notin \{4,6\}$. Then for no integer $m\geq 1$, $\lambda_p^m$ is an integer.  
		\end{lem}
		\begin{proof}
			Assume to the contrary that $\left(2 \cos(\tfrac{\pi}{p})\right)=\sqrt[m]{l}$ for a positive integer~$l$. As the two numbers are equal, their field extensions $\mathbbm{Q}\left(2\cos\left(\tfrac{\pi}{p} \right)\right)$ and $\mathbbm{Q}\left(\sqrt[m]{l}\right)$ coincide. It is well known that the degree of the algebraic field extension $\left[\mathbbm{Q}\left(2\cos\left(\tfrac{\pi}{p} \right)\right):\mathbbm{Q}\right]$ equals $\tfrac{\varphi(2p)}{2}$, where $\varphi$ denotes Euler's $\varphi$ function. For any element $\sigma$ in the Galois group of the field extension, the equation  $\sigma(\sqrt[m]{l})=\varepsilon \sqrt[m]{l}$ holds, where $\varepsilon$ is an $m$-th root of unity. As  $\mathbbm{Q}\left(\sqrt[m]{l}\right)$ is a real field extension, $\varepsilon \in \{\pm 1\}$, and hence $\left[\mathbbm{Q}\left(\sqrt[m]{l}\right):\mathbbm{Q}\right]\leq2$. This yields $\varphi(2p)\leq 4$, and therefore $p\in \{2,3,4,5,6\}.$ In order to show that $p\neq 5$, we use the well known fact that $2\cos\left(\tfrac{\pi}{5}\right)=\Phi\notin \mathbbm{Z}$, where $\Phi$ denotes the golden ratio. Let $F_m$ be the $m$-th Fibonacci number, defined by initial values $F_0=0$ and $F_1=1$ and the recurrence relation $F_{m+1}=F_m+F_{m-1}$. Then the golden ratio obeys the relation  $\Phi^m=\Phi \cdot F_m+F_{m-1}$ for $m\geq 1$. As by definition every Fibonacci number is a positive integer, the equation for $\Phi^m$ implies that the term $\left(2\cos\left(\tfrac{\pi}{5}\right) \right)^m$ is not integral for any $m\geq 1$.  	
		\end{proof}
		In particular $\lambda_p^2$ is not integral for  $p\notin \{4,6\}$ by the lemma above. Therefore the entries of the second non-trivial row, which are of the form $\lambda_p^2k-1$ for some integer $k\geq 1$, are also not integral.
		
		\section{Farey graph $\mathscr{F}_p$ and friezes of type $\Lambda_p$}\label{sec:farey_graph}
		In this section we give an interpretation of the results of \cite{hj_2017} in the language of \cite{m-got_2015} in order to give a formula to determine the entries of the friezes of type $\Lambda_p$ from the values of the vertices of the corresponding  $p$-angulated path in the Farey graph $\mathscr{F}_p$ and vice versa.
		
		In order to do so, we recall necessary ingredients from  \cite[Section 5.2]{hj_2017}. For more details, see  \cite[Section 5.1]{hj_2017} and \cite[Section 2]{sw_2016}.
		
		We work in the (completed) hyperbolic plane $\overline{\mathbbm{H}}=\{z\in \mathbbm{C} \mid \Ima(z)\geq 0\}\cup\{\infty\}$. Let $L=\{iy \mid 0 \leq y < \infty \}\cup \{\infty \}$ be a line in $\overline{\mathbbm{H}}$ through the origin. An ideal point of $\overline{\mathbbm{H}}$ is a point on the horizontal axis. 
		
		Let $p\geq 3$ an integer and let $G_p$ be the discrete subgroup of the group of all M{\"o}bius transformations of $\overline{\mathbbm{H}}$ generated by $\sigma$ and $\tau_p$, for $\sigma(z)= -\tfrac{1}{z}$ and $\tau_p(z)=z+\lambda_p$. 
		
		Then the Farey graph $\mathscr{F}_p$ has vertices given by the $G_p$-orbit of $\infty$, and edges by the $G_p$-orbit of $L$. It is the skeleton of a tiling $\overline{\mathbbm{H}}$ by ideal $p$-gons, that is, $p$-gons whose vertices are ideal points. 
		
		Let $n \geq 0$ be an integer and $q_0$,$ \ldots $,$q_{n+2}$ positive integers. Define M{\"o}bius transformations by
		\begin{align*}
		\xi_{\alpha}(z)=q_{\alpha}\lambda_p -\frac{1}{z}.
		\end{align*} 
		Then $\xi_{\alpha}=\tau_p^{q_{\alpha}}\circ \sigma$ is in $G_p$, so 
		\begin{align}\label{eq:Farey_vertices}
		v_{\alpha}=\left\{ \begin{matrix}
		\infty & \alpha=0 \\
		\xi_0 \cdots \xi_{\alpha-1}(\infty) & 1\leq \alpha \leq n+2.
		\end{matrix} \right.
		\end{align}
		are vertices of $\mathscr{F}_p$ for $1\leq \alpha \leq n+2$.
		
		In \cite[Section 5]{hj_2017}, the authors prove the following: 
		\begin{lem}\label{lem:farey_general}
			Assume that $\xi_0 \cdots \xi_{n+2}(\infty)=\infty$.
			\begin{itemize}
				\item[(i)] The vertices $v_0,\ldots,v_{n+2}$ form a closed path in $\mathscr{F}_p$.
				\item[(ii)] Let $P'$ be the  full subgraph of $\mathscr{F}_p$ defined by $v_0,\ldots,v_{n+2}$. Then $P'$ is an $(n+3)$-gon with a $p$-angulation which divides $P'$ into $p$-gons $P_1',\ldots,P_s'$ such that
				\begin{align*}
				q_{\alpha}= | \{P_i'\mid P_i' \text{ is incident with } v_{\alpha} \}|
				\end{align*}  
				for $0\leq \alpha \leq n+2$.
			\end{itemize}
		\end{lem}
		Recall that the full subgraph on $v_0,\ldots, v_{n+2}$ is obtained by deleting all other vertices and the edges incident with them in $\mathscr{F}_p$.
		
		Figure~\ref{fig:farey_quadrangulation} shows (a subgraph of) the Farey graph $\mathscr{F}_4$ and the Farey path $P'$ of $D$ shown in Figure~\ref{fig:quadrangulation} except for the vertex $v_0$.
		The vertices of the closed path (bold path in figure) are $v_0=\infty$, $v_1=\tfrac{\sqrt{2}}{1}$, $v_{2}=\tfrac{3}{2\sqrt{2}}$, $v_3=\tfrac{2\sqrt{2}}{3}$, $v_{4}=\tfrac{1}{\sqrt{2}}$,
		$v_5=\tfrac{\sqrt{2}}{3}$,  $v_{6}=\tfrac{3}{5\sqrt{2}}$, $v_7=\tfrac{2\sqrt{2}}{7}$,
		$v_{8}=\tfrac{1}{2\sqrt{2}}$ and $v_9=\tfrac{0}{1}$. The number $q_{\alpha}$ arises as the path $P'$ takes the $q_{\alpha}$-th right turn at vertex $v_{\alpha}$ and this coincides with the number of $p$-gons incident with $v_{\alpha}$. This is due to \cite[Section 2]{sw_2016}.  For instance, $q_{1}=2$, as $P_1'$ and $P_2'$ are incident with $v_1$. 
		
		\begin{figure}[!htb]
			\begin{center}
				\includegraphics[scale=0.6, page=3]{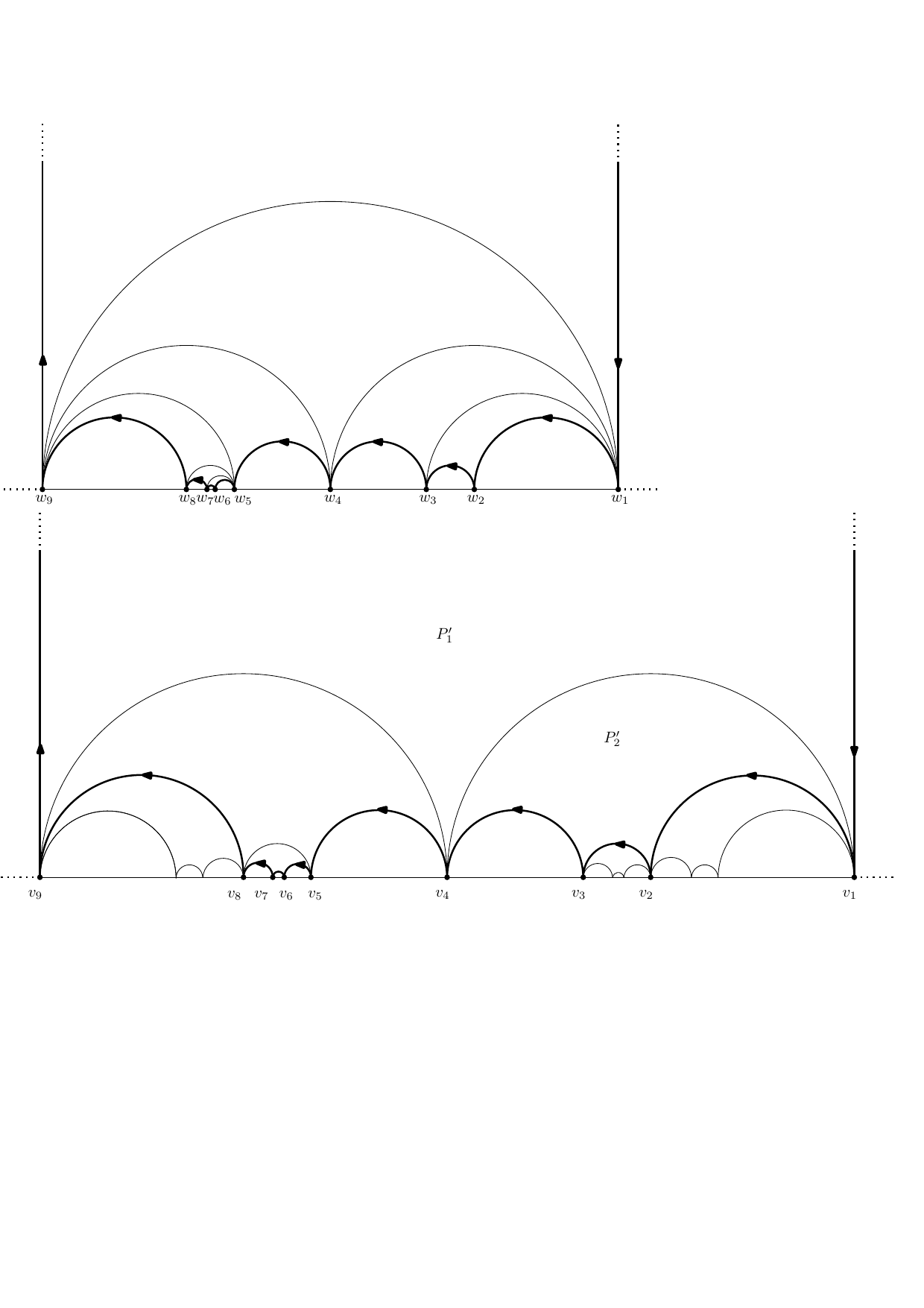}
				\caption{The Farey graph $\mathscr{F}_4$ is the skeleton of a tiling of
					$\overline{ \mathbbm{H} }$ by ideal $4$-gons like $P_1'$ and $P_2'$.}
				\label{fig:farey_quadrangulation}
			\end{center}
		\end{figure} 
		
		By identifying $v_{\alpha}$ with $\alpha$, the $p$-angulation of $P'$ becomes a $p$-angulation $D$ of $P$ which divides $P$ into $p$-gons $P_1$,$\ldots$,$P_s$.
		
		We need the following definition for stating the results:
		
		\begin{defn}
			The Farey distance of two vertices $v_{i}=\tfrac{a_i}{b_i}$ and $v_{j}=\tfrac{a_j}{b_j}$ of the Farey graph $\mathscr{F}_p$ is 
			\begin{align}
			d(v_i,v_j):= |a_i b_j - a_j b_i |.
			\end{align}	
		\end{defn}
		Note that the Farey distance does not satisfy the triangle inequality.
		
		For the rest of this section, let $(q_0\lambda_p,q_1\lambda_p,\ldots,q_{n+1}\lambda_p,q_{n+2}\lambda_p)$ be the quiddity sequence of the frieze of type $\Lambda_p$ corresponding to the $p$-angulation $D$ of the $n+3$-gon, $(m_{i,j})_{i,j\in \mathbb{Z}}$ denote the entries of this frieze and $P'$ with $v_1,\ldots, v_n$ the associated Farey path.
		
		In preparation for Proposition~\ref{prop:farey_frieze} we show the following result. 
		
		\begin{lem}\label{lem:farey}
			Let $0\leq \alpha \leq n+2$. The vertices $v_{\alpha}$ of $P'$ satisfy the equation
			\begin{align}
			v_{\alpha}=\frac{m_{0,\alpha-1}}{m_{1,\alpha-1}},
			\end{align}
		\end{lem}
		\begin{proof}
			Recall that
			\begin{align*}
			v_{\alpha}=\left\{ \begin{matrix}
			\infty & \alpha=0 \\
			\xi_0 \cdots \xi_{\alpha-1}(\infty) & 1\leq \alpha \leq n+2.
			\end{matrix} \right.,
			\end{align*} 
			where $\xi_{\alpha}(z)=q_{\alpha}\lambda_p - \frac{1}{z}$.\\
			We will show the statement by induction.
			
			Let $1 \leq \alpha \leq n+2$ be fixed. Note that 
			\begin{align*}
			\xi_{\alpha-1}(\infty)=m_{\alpha-1,\alpha-1}
			\end{align*} 
			and
			\begin{align*}
			\xi_{\alpha-2}\xi_{\alpha-1}(\infty)&=\xi_{\alpha-2}(m_{\alpha-1,\alpha-1})=m_{\alpha-2,\alpha-2}-\frac{1}{m_{\alpha-1,\alpha-1}}\\
			&=\frac{m_{\alpha-2,\alpha-2}m_{\alpha-1,\alpha-1}-1}{m_{\alpha-1,\alpha-1}}=\frac{m_{\alpha-2,\alpha-1}}{m_{\alpha-1,\alpha-1}}.
			\end{align*}
			For $1\leq k <\alpha-1$, assume that
			\begin{align*}
			\xi_{\alpha-1-k}\cdots \xi_{\alpha-1}(\infty)=\frac{m_{\alpha-1-k,\alpha-1}}{m_{\alpha-1-k+1,\alpha-1}}.
			\end{align*}
			As shown above, this holds for $k=1$.
			Then, as using the induction hypothesis for $k=\alpha-2$ gives the first equality, we further obtain that
			\begin{align*}
			\xi_{0}\xi_{1}\cdots\xi_{\alpha-1}(\infty)=\xi_{0}\left(\frac{m_{1,\alpha-1}}{m_{2,\alpha-1}}\right)=m_{0,0}-\frac{1}{\frac{m_{1,\alpha-1}}{m_{2,\alpha-1}}}=\frac{m_{0,0}m_{1,\alpha-1}-m_{2,\alpha-1}}{m_{1,\alpha-1}}.
			\end{align*}
			By equation~\eqref{eq:relation_1} for $i=0$ and $j=\alpha-1$, we get that
			\begin{align*}
			v_{\alpha}= \xi_{0}\cdots\xi_{\alpha-1}(\infty)=\frac{m_{0,\alpha-1}}{m_{1,\alpha-1}}
			\end{align*}
			as desired. Note that $v_{n+2}=\tfrac{m_{0,n+1}}{m_{1,n+1}}=\tfrac{0}{1}=0$.
			
			We can extend this formula to $\alpha=0$ as
			\begin{align*}
			\frac{m_{0,-1}}{m_{1,-1}}=\frac{1}{0} 
			\end{align*} 
			and hence the term coincides with $v_{0}=\infty$.
		\end{proof}
		The denominators $e_{1,\searrow}$ and numerators $e_{0,\searrow}$, which give the vertices $v_{\alpha}$ for $0\leq \alpha \leq n+2$ can hence be read "diagonally" by the corresponding frieze of type $\Lambda_p$ of $D$,
		\begin{align*}
		e_{0,\searrow}&=(1,m_{0,0},m_{0,1}\ldots,m_{0,n-1},1,0) \\
		e_{1,\searrow}&=(0,1,m_{1,1},m_{1,2},\ldots, m_{1,n},1).
		\end{align*}   
		For instance, the quadrangulation $D$ shown in Figure~\ref{fig:quadrangulation} gives the following sequence of numerators 
		\begin{align*}
		(1,\sqrt{2},3,2\sqrt{2},1,\sqrt{2},3,2\sqrt{2},1,0)
		\end{align*} 
		and denominators 
		\begin{align*}
		(0,1,2\sqrt{2},3,\sqrt{2},3,5\sqrt{2},7,2\sqrt{2},1).
		\end{align*} 
		We can now state a general formula for the entries of the friezes of type $\Lambda_p$ corresponding to $P'$. Indices are considered modulo $n+3$.
		\begin{prop}\label{prop:farey_frieze}
			The entries of the frieze of type $\Lambda_p$ and width $n$ corresponding to $D$ fulfil the equation
			\begin{align*}
			m_{i,j}=d(v_{i-1},v_{j+1}),
			\end{align*} 
			where $v_{0},\ldots, v_{n+2}$ are the vertices of $P'$.   
		\end{prop}
		\begin{proof}
			The value of $d(v_{i-1},v_{j+1})$ is by Lemma~\ref{lem:farey}
			\begin{align*}
			d(v_{i-1},v_{j+1}) = d\left(\frac{m_{0,i-2}}{m_{1,i-2}},\frac{m_{0,j}}{m_{1,j}}\right) = | m_{0,i-2}m_{1,j} - m_{0,j}m_{1,i-2}|=m_{i,j}
			\end{align*}
			where the last equality follows  via \eqref{eq:relation_2}. By symmetry properties of the frieze, this holds for either $i<j$ and $i\geq j$.
		\end{proof}
		
		\begin{cor}
			The Farey distance between $v_{\alpha-1}$ and $v_{\alpha+1}$ is $q_{\alpha}\lambda_p$. 
		\end{cor}
		\begin{proof}
			This follows immediately by Proposition~\ref{prop:farey_frieze} with $i=j=\alpha$.
		\end{proof}
		
		Finally, we compare the values of the vertices of the Farey paths corresponding to friezes of type $\Lambda_4$ and $\Lambda_6$ to the associated Farey paths corresponding to Conway--Coxeter friezes described in Section~\ref{sec:friezes_type_4} and \ref{sec:friezes_type_6} respectively.
		
		Let $p=4$ or $p=6$ and $D$ be a $p$-angulation of a polygon with $n+3$ vertices. Note that $n+3$ is even by Remark \ref{rmk:vertices_of_p-angulation}. Let $T_D^{\bullet}$ be an associated triangulation. Then  $(t_0^{\bullet}\lambda_p, \tfrac{t_1^{\bullet}}{\lambda_p}, \ldots, t_{n+1}^{\bullet}\lambda_p, \tfrac{t_{n+2}^{\bullet}}{\lambda_p})$ is the quiddity sequence of the frieze of type $\Lambda_p$ associated to $T_D$. Denote by  $P_T'$ the corresponding triangulated Farey path of $\mathscr{F}_3$. Let $m_{i,j}^{\bullet}$ be the entries of the Conway--Coxeter frieze of $T_D^{\bullet}$ and let $w_{0},\ldots,w_{n+2}$ be the vertices of $P_T'$.
		\begin{prop}
			The vertices of $P_T'$ satisfy the equation
			\begin{align*}
			w_{\alpha} = \frac{1}{\lambda_p}v_\alpha
			\end{align*}
			for $0\leq \alpha \leq n+2$ and $p=4$ or $p=6$.
		\end{prop}
		\begin{proof}
			Proposition~\ref{prop:friezes} and the corresponding arguments in Section~\ref{sec:friezes_type_6} yield
			\begin{align*}
			v_{2i+1}=\frac{m_{0,2i}}{m_{1,2i}}= \frac{\lambda_p m_{0,2i}^{\bullet}}{m_{1,2i}^{\bullet}}=\lambda_p w_{2i+1}
			\end{align*}
			for odd vertices and
			\begin{align*}
			v_{2i}=\frac{m_{0,2i-1}}{m_{1,2i-1}}= \frac{m_{0,2i-1}^{\bullet}}{\frac{1}{\lambda_p} m_{1,2i-1}^{\bullet}}=\lambda_p w_{2i}
			\end{align*} 
			for even vertices, with $0\leq i \leq \frac{n+3}{2}$.
		\end{proof}
		Similarly, we obtain this result for the associated triangulation $T_D^{\circ}$.
		
		Figure~\ref{fig:farey_triangulation} shows (a subgraph of) the Farey graph $\mathscr{F}_3$ and the Farey path $P_T'$ of $T_{D}^{\bullet}$, shown in Figure~\ref{fig:triangulation_quadrangulation}, except for the vertex $w_0$.
		The vertices of the closed path (bold path in figure) are $w_0=\infty$, $w_1=\tfrac{1}{1}$, $w_{2}=\tfrac{3}{4}$, $w_3=\tfrac{2}{3}$, $w_{4}=\tfrac{1}{2}$,
		$w_5=\tfrac{1}{3}$,  $w_{6}=\tfrac{3}{10}$, $w_7=\tfrac{2}{7}$,
		$w_{8}=\tfrac{1}{4}$ and $w_9=\tfrac{0}{1}$. The number $t_{\alpha}$ arises as the path $P_T'$ takes the $t_{\alpha}$-th right turn at vertex $w_{\alpha}$ and this coincides with the number of triangles incident with $v_{\alpha}$.  For instance, $q_{1}=4$, as $T_1'$, $T_2'$, $T_3'$ and $T_4'$ are incident with $w_1$. 
		
		\begin{figure}[!htb]
			\begin{center}
				\includegraphics[scale=0.6, page=2]{farey_triangulation}
				\caption{The Farey graph of the triangulation $T_{D}^{\bullet}$  shown in Figure~\ref{fig:triangulation_quadrangulation}.}
				\label{fig:farey_triangulation}
			\end{center}
		\end{figure}

		
		\paragraph{Acknowledgements}  The author is supported by the Austrian Science Fund (FWF): W1230, Doctoral Program ‘Discrete Mathematics’. 
		The author thanks his supervisor Karin Baur for her great advice and support and Peter J{\o}rgensen as well as anonymous referees for helpful comments on earlier versions. Furthermore, he thanks Florian Kainrath for providing the key ingredients for the proof of Lemma~\ref{lem:power_of_lambda_p}. The main work on this paper was done while the author was a guest at the School of Mathematics at the University of Leeds, England. He is deeply grateful to Robert~J.~Marsh for the invitation.

		\bibliographystyle{plain}
		\bibliography{andritsch}

	\end{document}